\documentclass[12pt]{amsart}
\pdfoutput=1 

\usepackage[
text={440pt,575pt},
headheight=9pt,
centering
]{geometry}

\usepackage{hyperref}
\usepackage{xcolor}

\definecolor{darkred}{RGB}{160,0,0}
\definecolor{darkblue}{RGB}{0,0,160}
\hypersetup{
  colorlinks,
  citecolor=darkblue,
  filecolor=black,
  linkcolor=darkblue,
  urlcolor=darkblue
}

\usepackage[utf8]{inputenc}
\usepackage[T1]{fontenc}
\usepackage{microtype}
\usepackage{lmodern}
\usepackage{amsmath,amssymb,amsthm}
\usepackage{soul}
\usepackage{mdwlist}
\usepackage[pdftex]{graphicx}
\usepackage{latexsym}
\usepackage{verbatim}
\usepackage{graphicx,caption,subcaption}
\usepackage{xcolor}
\usepackage{todonotes} \setlength{\marginparwidth}{2.5cm}
\usepackage{enumitem}
\usepackage[normalem]{ulem}

\allowdisplaybreaks 

\theoremstyle{plain}
\newtheorem{theorem}{Theorem}
\newtheorem{corollary}[theorem]{Corollary}

\theoremstyle{definition}
\newtheorem{definition}[theorem]{Definition} 	
\newtheorem{remark}[theorem]{Remark}	   
\newtheorem{example}[theorem]{Example}

\newcommand{\R}{\mathbb{R}}

\newcommand{\N}{\mathbb{N}}
\newcommand{\PP}{\mathbb{P}}
\newcommand{\NN}{\mathrm{N}}
\newcommand{\C}{\mathbb{C}}
\newcommand{\D}{\mathcal{D}}
\newcommand{\G}{\mathcal{G}}
\newcommand{\E}{\mathbb{E}}
\newcommand{\Var}{\mathrm{Var}}

\newcommand{\Bin}{\mathrm{Bin}}
\newcommand{\rk}{\mathrm{rk}}
\newcommand{\inv}{\mathrm{inv}}
\newcommand{\des}{\mathrm{des}}
\newcommand{\dx}{\mathrm{d}x}

\title{Extreme Values of Permutation Statistics}
\author{Philip Dörr, Thomas Kahle}
\date{\today}

\subjclass[2010]{Primary: 60G70, 05A16; Secondary: 20F55, 62R01}


\keywords{extreme values, permutation statistics, Coxeter group}

\thanks{The authors are supported by the DFG (314838170, GRK 2297,
  ``MathCoRe'').}

\begin{document}

\begin{abstract}
We investigate extreme values of Mahonian and Eulerian distributions arising from counting inversions and descents of random elements of finite Coxeter groups. To this end, we construct a triangular array of either distribution from a sequence of Coxeter groups with increasing ranks. To avoid degeneracy of extreme values, the number of i.i.d.\ samples $k_n$ in each row must be asymptotically bounded. We employ large deviations theory to prove the Gumbel attraction of Mahonian and Eulerian distributions. It is shown that for the two classes, different bounds on $k_n$ ensure this.
\end{abstract}

\maketitle

%
%

\section{Introduction}
Statistics on random permutations, such as the number of inversions, descents, length and number of cycles, have a long history and have also attracted recent interest \cite{bruck2019central, chatterjee2017central, kahle2020counting}. These statistics can often be generalized from permutations to elements of finite Coxeter groups, where the validity of results on the statistics may depend on group theoretic properties. This is visible, for example, for the large rank asymptotics of the underlying Coxeter groups, as central limit theorems do not hold if the variance of permutation statistics is too small, which can happen in dihedral groups.

Central limit theorems (CLTs) for inversions and descents on finite Coxeter groups were found by Kahle \& Stump \cite{kahle2020counting}. For the symmetric groups, a proof of the CLT for both statistics was already given by Bender~\cite{bender1973central}.  Chatterjee \& Diaconis~\cite{chatterjee2017central} used the method of interaction graphs developed in \cite{chatterjee2008new} to prove a CLT for the sum of descents and inverse descents.  Röttger \& Brück~\cite{bruck2019central,   rottger2018asymptotics} and F\'{e}ray \cite{feray2020central} extended this to other types of Coxeter groups. The work of Conger \& Viswanath~\cite{conger2007normal} provides CLTs for permutations on multisets, and He~\cite{he2022central} studied a CLT for the two-sided descent statistic on a Mallows-distributed permutation.

In this paper, we initiate the study of extreme values of permutation statistics. That is, we study the distribution of the maximum value of a permutation statistic over a collection of independent samples.  In extreme value theory, one seeks limit theorems in the way of $a_n^{-1}(M_n - b_n) \overset{\D}{\longrightarrow} G,$ where $M_n$ is the maximum of certain random variables, $G$ is a so-called \textit{extreme value distribution,} and $a_n, b_n$ are deterministic sequences. If $M_n := \max\{X_1, \ldots, X_n\}$ is based on a sequence $(X_n)_{n \in \N}$ of i.i.d.\ random variables, then $G$ is either a Gumbel, a Fr\'{e}chet, or a Weibull distribution, and $(X_n)_{n \in \N}$ is said to be in the \textit{max-domain of attraction} (MDA) of $G$. The Fr\'{e}chet distribution attracts heavy-tailed distributions, and the Weibull distribution attracts smooth distributions with a finite right endpoint, e.g., the uniform distribution. The Gumbel distribution is an intermediate case that attracts many important distributions, such as the normal and exponential distributions. See, e.g., \cite[Chapter 1]{leadbetter2012extremes} or \cite[Section 21.4]{van2000asymptotic} for comprehensive overviews.

If $(X_n)_{n \in \N}$ follows a finitely supported discrete distribution, then there is a finite right endpoint $x^*$ with $\PP(X_1 = x^*) > 0.$ Therefore, with probability $1$ there is some $N \in \N$ such that $M_N = M_{N+1} = \ldots = x^*$. Hence, no affine-linear rescaling can achieve a non-degenerate limit behavior of the maxima. This also affects permutation statistics if we consider them on a single Coxeter group.
%
This lack of non-degenerate extreme value limits for i.i.d.\ sequences also affects infinitely supported discrete distributions, e.g., the Poisson and geometric distributions. A necessary condition for the existence of a non-degenerate extreme value limit is given in \cite[Theorem 1.7.13]{leadbetter2012extremes}. Another interesting work on discrete distributions in MDAs is \cite{shimura2012discretization}. For the discrete distributions that are outside of all MDAs, it is possible to construct a row-wise independent triangular array $(X_{nj})_{j=1,\ldots, k_n}$ with row-wise maxima $M_n := \max\{X_{n1}, \ldots, X_{nk_n}\}$ in order to cover entire classes of distributions. This approach can also be used for permutation statistics over families of Coxeter groups.    

To this day, there is no complete classification of non-degenerate limits of $(M_n - b_n)/a_n$
for such triangular arrays. Several efforts have been made for triangular arrays consisting of common probability distribution families, with the limit often being the Gumbel distribution.  One common technique, which we also employ, is to draw connections to the i.i.d.\ extreme value behavior of the standard normal distribution. Anderson et al.~\cite{anderson1997maxima} proved a Gumbel limit for
a uniform triangular array $(X_{nj})_{j=1,\ldots,n}$ of Poisson variables $(R_{n,i}) \sim \mathrm{Po}(\lambda_n),$ where the sequence $(\lambda_n)_{n \in \N} \subseteq \N$ satisfies a minimum growth rate. Dkenge et al.~\cite{dkengne2016limiting} gave a characterization for the more general case of a row-wise stationary triangular array $(\xi_{nj})_{j=1,\ldots, k_n}$, using a suitable growth rate of $k_n$ and well-known mixing conditions as stated, e.g., in Leadbetter et al.~\cite[Section~3.2]{leadbetter2012extremes}. However, the framework of Dkenge et al.\ additionally requires that all $\xi_{nj}$ have an infinite right endpoint. Recently, Panov and Morozova~\cite{panov2021extreme} classified various mixture models with heavy-tailed impurity, including situations where the extreme value limit is neither of the Gumbel, Fr\'{e}chet or Weibull distributions.

Regarding extreme value theory for permutation statistics, Mladenovi\'{c}~\cite{mladenovi2002note} studied the extreme value behavior of the \textit{largest gap statistic} on the symmetric group. For a permutation $\omega = (a_1, \ldots, a_n) \in S_n$, there are the random variables $X_{nj}(\omega) := |a_j - a_{j+1}|$, $j=1, \dotsc, n$, $a_{n+1} = a_1$. These form a triangular array whose row-wise maximum $M_n := \max\{X_{n1}, \ldots, X_{nn}\}$ is the largest gap occurring in~$\omega$. Mladenovi\'{c} proved that the sequence $(M_n)_{n \in \N}$ is attracted to the Weibull-2-distribution. However, all entries of this triangular array are based on \textit{the same permutation} in each row, and thus are not stochastically independent.

For the number of inversions and descents, no such approach is feasible. Instead, we construct triangular arrays of \textit{independent samples} from the Coxeter groups in each row. This means that we consider sequences of Coxeter groups $(W_n)_{n \in \N}$ with ranks $n = \rk(W_n)$ and triangular arrays $(X_{nj})_{j=1,\ldots,k_n}$ of permutation statistics. On each Coxeter group $W_n$, we draw $k_n$ samples of the permutation statistics. To achieve a non-degenerate extreme value behavior, the sequence $(k_n)_{n \in \N}$ must be divergent. If $k_n$ grows only slowly, then the CLT suggests that the rows for which $k_n$ is large behave similarly to the standard normal distribution. If $k_n$ grows too fast, then the discrete character of the permutation statistics will be dominant. Therefore, the growth rate of $k_n$ is to be determined. In this work, we prove that under suitable conditions on the growth of $k_n$, the row-wise maximum $M_n := \max\{X_{n1}, \ldots, X_{nk_n}\}$ is attracted to the Gumbel distribution.

This paper is structured as follows. Section~\ref{sec:prelim} gives preliminaries about Coxeter group theory and permutation statistics, including well-known representations of generating functions for the numbers of inversions and descents. Section~\ref{sec:tail} reviews the main tool for our results on extremes of permutation statistics. Section~\ref{sec:results} gathers the main results.  The brief Section~\ref{sec:BerryEsseen} discusses the growth rate of $k_{n}$ under the Berry--Esseen assumption.

Following the common $O$-notation, we express magnitude relations for positive sequences $(a_n)_{n \in \N}, (b_n)_{n \in \N}$ as follows:
\begin{itemize}
\item $a_n = O(b_n)$ or $a_n \preccurlyeq b_n$ means that the sequence $a_n$ grows at most as fast as~$b_n$, i.e., $\limsup_{n\rightarrow\infty} \frac{a_n}{b_n} < \infty$. 
\item $a_n = o(b_n)$ means that $a_n$ grows slower than $b_n$, or is negligible compared to $b_n$, i.e., $\lim_{n\rightarrow\infty} \frac{a_n}{b_n} = 0$. This is also written as $a_n \prec b_n$ or $b_n \succ a_n$.
\item $a_n = \Theta(b_n)$ means that $a_n$ and $b_n$ have the same order of magnitude, i.e., both $a_n = O(b_n)$ and $b_n = O(a_n)$ hold.
\end{itemize}



%

\section{Preliminaries about Coxeter groups and permutation statistics}
\label{sec:prelim}

\begin{definition}
Let $S_n$ be the symmetric group on $\{1, \ldots, n\}$. For any $\pi \in S_n$, the number of \textit{inversions} and the number of \textit{descents} are
\begin{align*}
    \inv(\pi) &:= \#\{(i,j) \mid 1 \leq i < j \leq n, \pi(i) > \pi(j)\}, \\
    \des(\pi) &:= \#\{i \mid 1 \leq i < n, \pi(i) > \pi(i+1)\},
\end{align*}
where $\#$ denotes cardinality of a set. If we interpret $S_n$ as a probability space and draw each permutation with uniform probability $1/n!,$ these two quantities yield random variables that we denote by $X_\inv$ and $X_\des$. The probability distribution of $X_\inv$ is known as the \textit{Mahonian distribution} and that of $X_\des$ is known as the \textit{Eulerian distribution}. 
\end{definition}

A \textit{Coxeter group} $W$ is a group generated by a set $S$ whose
elements satisfy the following relations (and no further relations):
\begin{itemize}
\item $s^2 = e$ for all $s \in S$, where $e$ denotes the neutral
  element.
\item For any $s, s' \in S$, $s \not= s'$, there is a number
  $M(s,s') \in \{2, 3, \ldots, \infty\}$ such that
  $(ss')^{M(s,s')} = e$. In other words, $ss' \not= e$ and $M(s,s')$
  denotes the order of~$ss'$.  Here, $\infty$ means that no such
  relation holds.
\end{itemize}
The cardinality of $S$ is the \textit{rank} $\rk(W) = \#S$ of the Coxeter group $W$ and the pair $(W,S)$ is commonly called a \emph{Coxeter system}.

Inversions and descents can be defined on any Coxeter system $W = (W,S)$ using the word length function $l(w)$, which is the length of the shortest possible expression $w = s_1 \cdots s_k$ with generators $s_i \in S$. Let $T := \{wsw^{-1} \mid w \in W, s \in S\}$ be the set of \textit{reflections} of~$W$.
\begin{itemize}
\item The (right) \textit{inversions} of $w \in W$ are the set $\{t \in T\!\!: l(wt) < l(w)\}$.
\item The (right) \textit{descents} of $w \in W$ are the set $\{s \in S\!\!: l(ws) < l(w)\}$.
\end{itemize}
Again, the cardinalities of these sets are $\inv(w)$ and $\des(w)$, and the numbers of inversions and descents of a random Coxeter group element are $X_\inv$ and~$X_\des$, respectively.

\begin{remark} \label{rem:classification} 
A Coxeter group is \textit{irreducible} if it is not a direct product of smaller Coxeter groups. There is a complete classification of finite and irreducible Coxeter groups, see~\cite{coxeter1935complete}. There are three important families of finite irreducible Coxeter groups with increasing rank:
\begin{itemize}
\item the symmetric groups on $\{1, \ldots, n\}$ that are denoted by $A_{n-1}$ instead of $S_n$, because $A_{n-1} = S_n$ is generated by the $n-1$ adjacent transpositions $\tau_i$. Indeed, these satisfy $\tau_i^2 = e$ for $i = 1, \dotsc, n-1$,   $(\tau_i \tau_{i+1} \tau_i)^3 = e$ for $i = 1, \ldots, n-2$ and $(\tau_i\tau_j)^2 = e,$ if $|i-j| \geq 2$.
\item the groups $B_n$ of \textit{signed permutations} consisting of bijective maps $\pi\colon \{\pm 1, \dotsc, \linebreak \pm n\} \rightarrow \{\pm 1, \dotsc, \pm n\}$ that satisfy $\pi(-i) = -\pi(i)$ for $i = 1, \dotsc, n$. In this model, each element of $B_{n}$ consists of a permutation and a sign for each $i=1,\dotsc, n$, hence the name. The group is generated by adjacent transpositions of $A_{n-1}$ together with the map that inverts the sign of the first element, i.e., $(1, \ldots, n) \mapsto (-1, 2, \ldots, n)$.
\item the subgroups $D_n \subseteq B_n$ of \textit{even-signed permutations} consisting of elements with an even number of negative signs. This group is generated by the adjacent transpositions of $A_{n-1}$ together with the map that inverts the sign of the first two elements, i.e., $(1, \ldots, n) \mapsto (-1, -2, 3, \ldots, n)$.
\end{itemize}
There are also the dihedral groups $I_2(m), m \in \N,$ known as the isometry groups of regular $m$-gons. As all of these groups have rank 2, they are not a family of increasing rank. The rank grows only if we build direct products of dihedral groups. Finally, there are eight exceptional groups that are not addressed in this paper.
\end{remark}

Numerical information about inversions and descents on a Coxeter group~$(W,S)$ is stored in the \textit{generating functions} 
\begin{align*} \G_\inv(W;z) &:= \sum_{w \in W} z^{\inv(w)}, & 
    \G_{\des}(W;z) &:= \sum_{w \in W} z^{\des(w)}. 
\end{align*}
Obviously, these are polynomials with integer coefficients. The generating function of descents is known as the \textit{Eulerian polynomial}.

The following explicit formula for $\G_{\inv}$ is found in \cite[Chapter~7]{bjorner2006combinatorics}. The degrees appearing in Theorem~\ref{thm:factorGF} are certain integers associated with $W$ in the context of invariant theory.  For our purposes, it is sufficient to know that they exist, are known for all irreducible finite Coxeter groups, and are easy to derive for products.

\begin{theorem} \label{thm:factorGF}
Let $W$ be a finite Coxeter group with $\rk(W) = n$. Then, 
\[\G_\inv(W;z) = \prod_{i=1}^n (1 + z + \ldots + z^{d_i - 1}),\]
where $d_1, \ldots, d_n$ are the degrees of~$W$.
\end{theorem}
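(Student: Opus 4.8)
The plan is to reduce to irreducible Coxeter groups and then to identify both sides of the claimed identity with well-understood objects; throughout I use the standard fact $\inv(w) = l(w)$, so that $\G_\inv(W;z) = \sum_{w\in W} z^{l(w)}$ is the Poincar\'e polynomial of $W$. First, both sides are multiplicative for direct products: if $W = W' \times W''$ then $l(w',w'') = l(w') + l(w'')$, whence $\G_\inv(W;z) = \G_\inv(W';z)\,\G_\inv(W'';z)$, while the degrees of $W$ form the multiset union of the degrees of $W'$ and of $W''$. So it suffices to prove the formula for finite irreducible $W$, i.e.\ (Remark~\ref{rem:classification}) for one of $A_{n-1}$, $B_n$, $D_n$, $I_2(m)$, or one of the eight exceptional groups.

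For irreducible $W$ I would induct on the rank via maximal parabolic subgroups. Pick $s\in S$, put $J = S\setminus\{s\}$, $W_J = \langle J\rangle$, and let $W^J$ be the set of minimal-length representatives of the cosets $W_J\backslash W$. A standard fact is that every $w\in W$ factors uniquely as $w = uv$ with $u\in W_J$, $v\in W^J$, and $l(w) = l(u) + l(v)$; summing $z^{l(w)}$ over $W$ therefore gives
\[
  \G_\inv(W;z) \;=\; \G_\inv(W_J;z)\cdot\Bigl(\sum_{v\in W^J} z^{l(v)}\Bigr).
\]
Choosing $s$ so that $W_J$ is the next-smaller member of the same family reduces everything to computing the length distribution of $W^J$: for $S_n$ (remove an end node, $W_J = S_{n-1}$) this quotient is a chain of $n$ cosets of lengths $0,1,\dots,n-1$, contributing $1+z+\dots+z^{n-1}$ and the new degree $n$; for $B_n$ it is a chain of $2n$ cosets contributing $1+z+\dots+z^{2n-1}$ with new degree $2n$; for $D_n$ a slightly more intricate accounting of which degree departs and which re-enter yields the correct product. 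For $I_2(m)$ the check is immediate — one element each of length $0$ and $m$, two of each intermediate length, so $\G_\inv(I_2(m);z) = (1+z)(1+z+\dots+z^{m-1})$, matching the degrees $2,m$ — and the exceptional groups are a finite computation (which, per the conventions of this paper, we exclude in any case).

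A uniform, case-free argument runs through invariant theory, and this is essentially the content of \cite[Chapter~7]{bjorner2006combinatorics}. Realise $W$ as a reflection group on $\R^n$ and let $R = \R[x_1,\dots,x_n]$ with the standard grading. Chevalley's theorem says that $R^W$ is a polynomial ring on homogeneous generators of degrees $d_1,\dots,d_n$ and that $R$ is a free $R^W$-module; hence $\mathrm{Hilb}(R;t) = \mathrm{Hilb}(R^W;t)\cdot\mathrm{Hilb}(R_W;t)$ for the coinvariant algebra $R_W$ (the quotient of $R$ by the ideal generated by the positive-degree invariants), and since $\mathrm{Hilb}(R;t) = (1-t)^{-n}$ and $\mathrm{Hilb}(R^W;t) = \prod_i (1-t^{d_i})^{-1}$ one gets $\mathrm{Hilb}(R_W;t) = \prod_{i=1}^n (1 + t + \dots + t^{d_i-1})$. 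On the other hand $R_W$ has, in each degree $k$, a basis indexed by $\{w\in W : l(w) = k\}$ (the Schubert, or descent, basis of the coinvariant algebra), so $\mathrm{Hilb}(R_W;t) = \sum_{w\in W} t^{l(w)} = \G_\inv(W;t)$; equating the two expressions finishes the proof.

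The main obstacle, in either route, is the same: linking the purely combinatorial series $\sum_w z^{l(w)}$ to data governed by the $d_i$. Elementarily, this is precisely the assertion about the length distribution of $W^J$ in types $B$ and $D$ (type $A$ being the classical Lehmer-code statement); conceptually, it is the fact that the coinvariant algebra, graded by polynomial degree, has $\sum_w z^{l(w)}$ for its Poincar\'e polynomial — for Weyl groups this is the cohomology of the flag variety, and in general it requires the Bernstein--Gelfand--Gelfand/Demazure-operator construction. As the identity is classical, in the paper we simply cite it.
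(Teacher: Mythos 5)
The paper does not prove this theorem at all: it is quoted verbatim from \cite[Chapter~7]{bjorner2006combinatorics}, so there is no in-paper argument to compare yours against. Your write-up is a correct account of the standard proofs of this classical identity, and your second, invariant-theoretic route is essentially the one in the cited reference: reduce to the irreducible case by multiplicativity of both sides under direct products, use $\inv(w)=l(w)$ to identify $\G_\inv(W;z)$ with the Poincar\'e polynomial, then compare the Hilbert series of the coinvariant algebra computed two ways (Chevalley freeness giving $\prod_i(1+t+\dots+t^{d_i-1})$, the Schubert/Demazure basis giving $\sum_w t^{l(w)}$). Two caveats on your first, elementary route. In type $D$ the inductive step genuinely does not close the way it does in types $A$ and $B$: passing from $D_{n-1}$ to $D_n$ replaces the degree $n-1$ by the two degrees $n$ and $2n-2$, so the quotient $W^J$ contributes the non-trivial ratio $(1+\dots+z^{2n-3})(1+\dots+z^{n-1})/(1+\dots+z^{n-2})$ rather than a single factor, and verifying that this equals the length polynomial of the quotient is exactly the hard part you wave at with ``a slightly more intricate accounting.'' Also note that Theorem~\ref{thm:factorGF} is stated for \emph{all} finite Coxeter groups, so the exceptional types cannot simply be excluded here (they are excluded only from the asymptotic results later in the paper); they would have to be checked, which is a finite computation, or one falls back on the uniform invariant-theoretic argument. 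Since the paper's ``proof'' is a citation, your conclusion that one should simply cite the result is exactly what the authors do.
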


The generating function $\G_{\des}$ also has a decomposition, even into linear factors. This was proved by Brenti~\cite{brenti1994q} for all irreducible types except~$D$, and that case was resolved by Savage \& Visontai~\cite{savage2015s}. They proved that the Eulerian polynomial of these groups is real-rooted. From this, it is trivial to conclude that the roots are negative, since all coefficients of the Eulerian polynomial are positive. Apart from the sign, not much is known about the roots.

\begin{theorem} 
Let $W$ be a finite Coxeter group with $\rk(W) = n$. Then, 
\[\G_{\des}(W;z) = \prod_{i=1}^n (z+q_i)\]
for some $q_1, \ldots, q_n > 0$.
\end{theorem}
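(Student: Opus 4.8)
The plan is to reduce to the irreducible case and then import the known real-rootedness results together with an elementary observation about signs of roots. First I would note that $\G_{\des}$ is multiplicative over direct products: if $W = W_1 \times W_2$ is a Coxeter system with simple system $S = S_1 \sqcup S_2$, then for $w = (w_1, w_2)$ a reflection $s \in S_i$ is a descent of $w$ exactly when it is a descent of $w_i$, so $\des(w) = \des(w_1) + \des(w_2)$ and hence $\G_{\des}(W;z) = \G_{\des}(W_1;z)\,\G_{\des}(W_2;z)$. Since also $\rk(W) = \rk(W_1) + \rk(W_2)$, a factorization into $\rk(W_i)$ linear factors $z + q$ with $q > 0$ for each irreducible factor multiplies up to the asserted factorization for $W$. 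As every finite Coxeter group is a direct product of finite irreducible ones, it suffices to treat the groups in the classification recalled in Remark~\ref{rem:classification}.

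Next I would pin down $\G_{\des}(W;z)$ as a polynomial when $W$ is irreducible of rank $n$. Descent sets are subsets of $S$, so $\des(w) \leq n$ for all $w$ and $\deg \G_{\des}(W;z) \leq n$; the coefficient of $z^n$ counts the elements $w$ with $l(ws) < l(w)$ for every $s \in S$, which is precisely the longest element $w_0$, so that coefficient equals $1$. Thus $\G_{\des}(W;z)$ is monic of degree exactly $n$, and its constant term is $\#\{w : \des(w) = 0\} = 1$. The substantive input is real-rootedness: for types $A$, $B$, the dihedral groups, and the exceptional groups this is due to Brenti~\cite{brenti1994q} (for $I_2(m)$ one may also just check $\G_{\des}(I_2(m);z) = z^2 + (2m-2)z + 1$ by hand), while the $D$-type was settled by Savage \& Visontai~\cite{savage2015s}.

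Finally, $\G_{\des}(W;z) = \sum_{w \in W} z^{\des(w)}$ has non-negative coefficients and constant term $1$, so $\G_{\des}(W;z) \geq 1 > 0$ for every real $z \geq 0$; hence it has no root in $[0,\infty)$. Combined with real-rootedness, all $n$ of its roots lie in $(-\infty,0)$, say at $-q_1, \dots, -q_n$ with $q_i > 0$, and monicity gives $\G_{\des}(W;z) = \prod_{i=1}^n (z + q_i)$, as claimed. The only genuine obstacle here is the real-rootedness of the Eulerian polynomials, which we do not reprove but take from \cite{brenti1994q, savage2015s}; the remaining steps are routine bookkeeping over the classification and elementary facts about the longest element and about polynomials with non-negative coefficients.
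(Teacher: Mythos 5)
Your proposal is correct and follows essentially the same route as the paper: cite Brenti and Savage--Visontai for real-rootedness of the Eulerian polynomials of the irreducible types, then observe that positivity of the coefficients rules out non-negative roots, so all roots are negative. The reduction to the irreducible case via multiplicativity of $\G_{\des}$ over direct products and the check that the polynomial is monic of degree exactly $n$ are details the paper leaves implicit, and you supply them correctly.
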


Whenever a generating function factors, the corresponding statistic is a sum of independent contributions corresponding to the factors. Therefore, $X_\inv$ and $X_\des$ can be written as sums of independent (but not identically distributed) variables, which prove to be key for the extreme values of these statistics.

\begin{corollary} \label{cor:sumDec}
Let $W$ be a finite Coxeter group with $\rk(W) = n$. Then:
\begin{enumerate}
\item[a)] $X_\inv = \sum_{i=1}^n X_\inv^{(i)},$ where $X_\inv^{(i)} \sim U\left\{0, 1, \ldots, d_i - 1\right\}$ and  $d_1, \ldots, d_n$ are the degrees of $W$.
\item[b)] $X_\des = \sum_{i=1}^n X_\des^{(i)},$ where $X_\des^{(i)} \sim \Bin\left(1, (1 + q_i)^{-1}\right)$ and $q_1, \ldots, q_n$ are the negatives of the zeroes of $\G_\des(W)$.
\end{enumerate}
\end{corollary}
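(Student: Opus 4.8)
The plan is to exploit the elementary fact that the probability generating function (PGF) of a sum of independent $\N_0$-valued random variables equals the product of the individual PGFs, together with the fact that a PGF determines the law of an $\N_0$-valued variable uniquely. This is exactly the "whenever a generating function factors" principle alluded to above, made precise.

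First I would pass from generating functions to PGFs. Since every $w \in W$ is drawn with probability $1/|W|$, we have $\E\big[z^{X_\inv}\big] = \G_\inv(W;z)/|W|$ and $\E\big[z^{X_\des}\big] = \G_\des(W;z)/|W|$. Evaluating the generating functions at $z=1$ gives $\G_\inv(W;1) = \G_\des(W;1) = |W|$; combined with Theorem~\ref{thm:factorGF} this forces $\prod_{i=1}^n d_i = |W|$, and combined with the preceding theorem it forces $\prod_{i=1}^n (1+q_i) = |W|$. Hence
\[
\E\big[z^{X_\inv}\big] = \prod_{i=1}^n \frac{1 + z + \dots + z^{d_i-1}}{d_i}, \qquad
\E\big[z^{X_\des}\big] = \prod_{i=1}^n \frac{z + q_i}{1 + q_i}.
\]

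Next I would identify each factor with the PGF of a known distribution. The $i$-th factor $\frac{1}{d_i}\big(1 + z + \dots + z^{d_i-1}\big)$ is precisely the PGF of $U\{0,1,\dots,d_i-1\}$. For descents, since $q_i > 0$ by the preceding theorem, we rewrite $\frac{z+q_i}{1+q_i} = \frac{q_i}{1+q_i} + \frac{1}{1+q_i}\,z$, which is the PGF of a $\Bin\big(1,(1+q_i)^{-1}\big)$ variable, the success probability $(1+q_i)^{-1}$ lying in $(0,1)$. Letting $X_\inv^{(i)}$ (resp.\ $X_\des^{(i)}$) be independent across $i$ with these distributions, the two product formulas say that $\sum_{i=1}^n X_\inv^{(i)}$ has the same PGF as $X_\inv$ and $\sum_{i=1}^n X_\des^{(i)}$ has the same PGF as $X_\des$; by uniqueness of PGFs this gives a) and b).

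The argument is essentially bookkeeping, and I do not expect a genuine obstacle. The only point requiring care is checking that the normalized factors really are PGFs, i.e.\ have nonnegative coefficients summing to $1$: for inversions this is immediate, while for descents it rests precisely on the positivity $q_i > 0$ furnished by the previous theorem, which simultaneously guarantees that the Bernoulli parameter is a valid probability.
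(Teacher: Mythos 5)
Your proposal is correct and is exactly the argument the paper has in mind: the paper dispenses with a formal proof, stating only that a factorization of the generating function yields a decomposition into independent summands, and your normalization by $|W|$, identification of each factor with the PGF of a uniform or Bernoulli law, and appeal to uniqueness of PGFs is the standard way to make that one-line principle precise. No gaps.
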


\begin{remark} \label{rem:orders}
The means and variances of $X_\inv$ and $X_\des$ have been listed by Kahle \&  Stump~\cite[Corollary~3.2~and~4.2]{kahle2020counting}. The essential magnitudes are as follows. Whenever $W \in \{A_n\}_{n\in \N}, \{B_n\}_{n\in \N}, \{D_n\}_{n\in \N},$ then
\begin{align*}
    \E(X_\inv) &= \Theta(n^2), & \E(X_\des) &= n/2, \\ 
    \Var(X_\inv) &= \Theta(n^3), & \Var(X_\des) &= \Theta(n).
\end{align*}
\end{remark}

%
%

\section{Tail equivalence for non-i.d. sums}
\label{sec:tail}
In what follows, let $\Phi(x) = \int_{-\infty}^x \frac{1}{\sqrt{2\pi}}e^{-x^2/2}\dx$ be the cumulative distribution function (CDF) of the standard normal distribution. It is well known (e.g., \cite[Thm.~1.5.3]{leadbetter2012extremes}) that i.i.d.\ standard normal variables $N_1, N_2, \ldots$ are attracted to the Gumbel distribution $\Lambda(x) = \exp(-\exp(-x))$ by virtue of
\[
  \frac{M_n - \beta_n}{\alpha_n} \overset{\D}{\longrightarrow}
  \Lambda, \qquad M_n := \max\{N_1, \ldots, N_n\},
\]
using the constants $\alpha_n = (2\log n)^{-1/2}$,
$\beta_n = \sqrt{2\log(n)} - \alpha_n\bigl(\log \log n +
\log(4\pi)\bigr)/2$. If a family $F_1, F_2, \ldots$ of standardized distributions has \textit{tail equivalence} in the sense that
\begin{equation} 
1 - F_n\left(x_n\right) \sim 1 - \Phi(x_n)\quad \Longleftrightarrow \quad\frac{1 - F_n(x_{n})}{1 - \Phi(x_{n})} = 1 + o(1), \label{eqn1}
\end{equation}
where $(x_n)_{n \in \N} \subseteq \R$ is a sequence that is commonly limited in growth, then we can use that for any fixed $x$,
\[
  n\bigl(1 - \Phi(\alpha_n x + \beta_n)\bigr) \longrightarrow e^{-x},
\]
and we plug $x_n = \alpha_n x + \beta_n$ in
\eqref{eqn1}. Then, if the sequence $(\alpha_n x + \beta_n)_{n \in \N}$ does not violate the conditions for $x_n$ required in \eqref{eqn1}, combining both limit processes yields
\[
  n\bigl(1 - F_n\left(\alpha_n x + \beta_n\right)\bigr) \sim
  n\bigl(1 - \Phi(\alpha_n x + \beta_n)\bigr) \longrightarrow e^{-x}. \qedhere
\]
The subject of tail equivalence is closely related to the field of \textit{large deviations theory.} Based on limit theorems such as the strong law of large numbers or the CLT, this theory deals with bounds and asymptotic quantifications for the probabilities of large deviations from the limit. See \cite[Chapters 1 and~2]{dembo2009large} for an introduction. We assume the following framework for all theorems in this section.



\subsection*{Framework}
Let $X_1, X_2, \ldots$ be an at most countable sequence of independent (not necessarily i.d.) random variables. Without loss of generality, assume that all $X_k$ are centered. For $n \in \N,$ let $S_n := X_1 + \ldots + X_n$. Moreover, let $\sigma_k^2 = \E(X_k^2)$ for all $k = 1, \ldots, n$, let $s_n^2 := \sigma_1^2 + \ldots + \sigma_n^2$, and let $F_n$ denote the CDF of $S_n/s_n$. We aim to demonstrate tail equivalence between $F_n$ and $\Phi$ as described in \eqref{eqn1}. Upon additionally assuming that $X_1, X_2, \ldots$ are identically distributed, a seminal result on large deviations of $(1 - F_n)/(1 - \Phi)$ is due to Cram\'{e}r \cite{cramer1938nouveau}.

\begin{theorem}[cf.~Cram\'{e}r \cite{cramer1938nouveau}] \label{thm3.2.1}
Under the above framework, assume that $X_1, X_2, \ldots$ are i.i.d. and that the moment generating function of $X_1$ exists in a neighborhood of the origin. If $x = o(\sqrt{n}),$ then
\[
    \frac{1 - F_n(x)}{1 - \Phi(x)} = \exp\left(\frac{x^3}{\sqrt{n}}\mathcal{L}\left(\frac{x}{\sqrt{n}}\right)\right)\bigl(1 + o(1)\bigr)\,,
\]
where $\mathcal{L}(x) = \displaystyle{\sum\nolimits_{k=0}^\infty a_kx^k}$ is a power series with coefficients depending on the cumulants of~$X_1$.
\end{theorem}
%
%
A similar theorem that omits the assumption of identical distribution was developed by Feller \cite{feller1943generalization}. This theorem imposes boundedness assumptions on the random variables, therefore it is not a generalization of Theorem~\ref{thm3.2.1}.

\begin{theorem}[cf.~Feller \cite{feller1943generalization}] \label{thmfeller2}
Let $(\lambda_n)_{n \in \N}$ be a  sequence of constants such that $\lambda_n \longrightarrow 0$ and
\begin{equation}
    \forall k = 1, \ldots, n \negthickspace: |X_k| < \lambda_n s_n\,. \label{3.1}
\end{equation}
Let $x > 0$ be fixed and assume that
\[
    \forall n \in \N \negmedspace: 0 < \lambda_nx < (3 - \sqrt{5})/4 \approx 0.19.
\]
Then, there is a constant $\vartheta$ and a power series $Q_n(x) = \displaystyle{\sum\nolimits_{\nu=1}^\infty    q_{n,\nu}x^\nu}$ with coefficients $q_{n,\nu}$ depending on the first $\nu+2$ moments of $X_n$ such that
\begin{equation}
    1 - F_n(xs_n) = \exp\left(-\frac{1}{2}x^2Q_n(x)\right)\left(1 - \Phi(x) + \vartheta \lambda_ne^{-x^2/2}\right). \nonumber
\end{equation}
If, in particular, $0 < \lambda_nx < 1/12,$ then 
$|q_{n,\nu}| < \frac{1}{7}(12\lambda_n)^\nu$.
\end{theorem}

\begin{remark} \label{crucialremark} Theorem~\ref{thmfeller2} concerns finite sequences of random variables and does not contain any asymptotic statement. Nevertheless, it can be applied for each $n \in \N$ on a uniform triangular array $(X_{nj})_{j=1,\ldots,n}$ to draw asymptotic conclusions. As stated by Feller~\cite{feller1943generalization}, if it is possible to choose a sequence $(\lambda_n)$ with $\lambda_n = O(n^{-1/2})$, and if $x = x_n = o(n^{1/6}),$  then
\begin{align*}
    Q_n(x) = q_{n,1}x + \sum_{\nu=2}^\infty q_{n,\nu}x^\nu \leq~& \frac{12}{7}\lambda_nx + O(n^{-2/3}) = O(n^{-1/3}) \\
    {\Longrightarrow}~&\exp\left(-\frac{x^2}{2} Q_n(x)\right) \longrightarrow 1.
\end{align*}
Furthermore, $e^{-x^2/2}$ is bounded by $1$ and $\vartheta$ is a constant, so $\vartheta \lambda_n e^{-x^2/2} \longrightarrow 0$. Thus, whenever the aforementioned conditions are satisfied, we have $1 - F_n(x) \sim 1 - \Phi(x)$, the desired tail equivalence.

In comparison, Cram\'{e}r's Theorem~\ref{thm3.2.1} allows for the broader regime $x = o(n^{1/2})$, and it has also been generalized to non-i.d. independent random variables. We now introduce a large deviations theorem of Petrov \& Robinson \cite{petrov2008large}, which is, to the best of our knowledge, the weakest known generalization of Theorem~\ref{thm3.2.1}.

Under the above framework, let $L_j$ be the cumulant generating function of $X_j,$ that is, $L_j(z) = \log\left(\E\left(e^{zX_j}\right)\right)$. We assume that for some $H > 0$, all functions $L_j$ are analytic within the circle $\{z \in \C \negmedspace: |z| < H\}$. Moreover, we assume the existence of constants $(c_j)_{j \in \N}$ such that $\forall |z| < H, j \in \N \negmedspace: |L_j(z)| < c_j$ and
\begin{equation}
  \limsup_{n\rightarrow\infty} \sum_{j=1}^n \frac{c_j}{n} < \infty\,. \label{3.2}
\end{equation}
At last, we require that the variances $s_n^2$ grow at least linearly,
that is,
\begin{equation}
  \liminf_{n\rightarrow\infty} \frac{s_n^2}{n} > 0\,. \label{3.3}
\end{equation}
\end{remark}

\begin{theorem}[see Petrov \& Robinson \cite{petrov2008large}, Theorem 2.1] \label{thm3.2.4}
Given the conditions \eqref{3.2} and \eqref{3.3}, it holds that for $x = o(\sqrt{n}),$
\[
    {\frac{1 - F_n(x)}{1 - \Phi(x)} = \exp\left(\frac{x^3}{\sqrt{n}}\mathcal{L}_n\left(\frac{x}{\sqrt{n}}\right)\right)\bigl(1 + o(1)\bigr)\,,}
\]
where 
$\mathcal{L}_n(x) = \displaystyle{\sum\nolimits_{k=0}^\infty
  a_{kn}x^k}$ is a power series with coefficients $a_{kn}$ expressed
in terms of the cumulants of $X_1, \ldots, X_n$ of order up to and
including $n+3$.
\end{theorem}

\begin{remark} \label{bem3.2.5} For the extended regime 
  $n^{1/6} \prec x \prec n^{1/2}$, it is not trivial to obtain tail
  equivalence from Theorems~\ref{thm3.2.1} and~\ref{thm3.2.4}. To do so, we additionally need to demonstrate
\begin{align*}
    {\exp\left(\frac{x^3}{\sqrt{n}}\mathcal{L}_n\left(\frac{x}{\sqrt{n}}\right)\right)} & = 1 + o(1) \\
    {\Longleftrightarrow \frac{x^3}{\sqrt{n}}\mathcal{L}_n\left(\frac{x}{\sqrt{n}}\right)} & = o(1)\,.
\end{align*}
{The term $x^3/\sqrt{n}$ can become as large as $o(n)$. It is controlled only if $x = o(n^{1/6}),$ which is the same regime as in Theorem~\ref{thmfeller2}. For broader regimes, we need to control the power series $\mathcal{L}_n$. For $j, k \in \N,$ let $\gamma_{kj}$ be the $k$-th cumulant of $X_j$ and let} 
\[
    \Gamma_{kn} = \sum_{i=1}^n \frac{\gamma_{ki}}{n}\,.
\]
According to \cite[p.\ 2985]{petrov2008large}, the first coefficient of $\mathcal{L}_n$ is
\[
    a_{0,n} = \frac{\Gamma_{3,n}}{6\Gamma_{2,n}^{3/2}}\,.
\]
If $a_{0,n}$ is non-zero, then it is impossible to control $\mathcal{L}_n(x/\sqrt{n})$ for any $n^{1/6} \prec x \prec n^{1/2}$. To obtain tail equivalence from Theorem~\ref{thm3.2.4} within the extended regime $n^{1/6} \prec x \prec n^{1/2}$, it is necessary that $a_{0,n} = o(n^{-1})$. In the intermediate case of $a_{0,n} = o(1)$ and $a_{0,n} = \Omega(n^{-1}),$ the regime of $x$ can be extended at least partially. In that case, further coefficients of $\mathcal{L}_n$ may have to be taken into account.
\end{remark}


%
%

\section{Results for extremes of permutation statistics}
\label{sec:results}

Using the techniques discussed in the previous section, we can derive the extreme value behavior of triangular arrays on sequences of Coxeter groups. Let $(k_n)_{n \in \N}$ be a divergent sequence of natural numbers. We consider a triangular array where in the $n$-th row, we have $k_n$ i.i.d.~samples $X_{n1}, \ldots, X_{nk_n}$ with $X_{n1}$ being the number of inversions or descents on some finite Coxeter group of rank $n$. We suppose that the triangular array contains only samples of either $X_\inv$ or $X_\des,$ but not both.

It is important to distinguish whether dihedral groups are involved or not. For simplicity, we refer to finite irreducible Coxeter groups of $A$-, $B$- or $D$-type as the \textit{classical Weyl groups} since they are the Weyl groups of the classical groups. 

\subsection{Sequences of classical Weyl groups} 
We consider a sequence of classical Weyl groups $(W_n)_{n \in \N}$ with $\rk(W_n) = n~\forall n \in \N$. Let $X_\inv^{(n)}, X_\des^{(n)}$ be the number of inversions and descents on~$W_n$, respectively. With Corollary~\ref{cor:sumDec} we write
\begin{equation*}
  X_\inv^{(n)} = \sum_{i=1}^{n} X_\inv^{(n,i)} 
  \qquad \text{and}
  \qquad 
  X_\des^{(n)} = \sum_{i=1}^{n} X_\des^{(n,i)},
\end{equation*}
where $X_\inv^{(n,i)} \sim U\left\{0, 1, \ldots, d_i^{(n)} - 1\right\}$ and $X_\des^{(n,i)} \sim \Bin\left(1, \left(1 + q_i^{(n)}\right)^{-1}\right)$.  


\begin{remark} \label{bem4.2}
Since Theorem~\ref{thm3.2.4} permits a broader regime of $x$ than Theorem~\ref{thmfeller2}, it is preferable to apply Theorem~\ref{thm3.2.4} for both $X_\inv$ and $X_\des$. However, it turns out that the conditions of Theorem~\ref{thm3.2.4} are not satisfied for $X_\inv$. For $X_\inv^{(n,i)} \sim U\left\{0, 1, \ldots, d_i^{(n)} - 1\right\}$, the cumulant generating function is
\[
    L_i(z) = \log\left(\frac{1}{d_i^{(n)}}\sum_{k=0}^{d_i^{(n)}-1} e^{zk}\right) = \log\left(\frac{1 - e^{d_i^{(n)}z}}{d_i^{(n)}(1 - e^z)}\right).
\]
For some $H > 0,$ we need to find $c_i$ such that $L_i(z) < c_i$ $\forall |z| < H.$ In particular,
\[
    c_i \geq L_i(H) = \log\left(\frac{1 - (e^H)^{d_i^{(n)}}}{d_i^{(n)}(1 - e^H)}\right).
\]
Due to $e^H > 1$, we have that $L_i(H)$ grows linearly in $i$, as its argument grows exponentially in $i$. Therefore, $\displaystyle{\sum\nolimits_{j=1}^n c_j/n}$ grows linearly as well and is not bounded, so condition \eqref{3.2} is violated. In the case of descents, condition~\eqref{3.2} is not violated. However, we need to examine the power series $\mathcal{L}_n$ in order to determine the appropriate regime of $x$. The second, third, and fourth cumulants of $X_\des^{(n,i)} \sim \Bin\left(1, \bigl(1 + q_i^{(n)}\bigr)^{-1}\right) =: \Bin(1, p_i)$ are
\begin{align*}
    {\gamma_{2,i}} & {~= p_i(1-p_i),} \\
    {\gamma_{3,i}} & {~= p_i(1-p_i)(1 - 2p_i),} \\
    {\gamma_{4,i}} & {~= p_i(1-p_i)(1 - 6\gamma_{2,i}).}
\end{align*} 
Recall that $a_{0,n} = \Gamma_{3,n}/\Gamma_{2,n}^{3/2}$. The third cumulant $\gamma_{3,i}$ equals the third central moment of $X_{\des}^{(n,i)}$. Therefore, the sum $\sum\nolimits_{i=1}^n \gamma_{3,i}$ equals the third central moment of $X_{\des},$ which is zero as the distribution of $X_{\des}$ is symmetric for all finite Coxeter groups. In conclusion, we have $a_{0,n} = 0$. However, we need to take the second coefficient of $\mathcal{L}_n$ into account, which is, according to \cite[p.\ 2985]{petrov2008large}:
\[
    a_{1,n} = \frac{\Gamma_{4,n}\Gamma_{2,n} - 3\Gamma_{3,n}^2}{24\Gamma_{2n}^3} = \frac{\Gamma_{4,n}\Gamma_{2,n}}{24\Gamma_{2,n}^3}.
\]
Due to $\gamma_{4,i} = \gamma_{2,i}(1 - 6\gamma_{2,i})$ and $1 - 6\gamma_{2,i}
\in [-1/2, 1)$ for all $p_i \in (0,1)$, we have $|\gamma_{4,i}| < |\gamma_{2,i}|
\forall i = 1, \ldots, n \Longrightarrow |\Gamma_{4,n}| < |\Gamma_{2,n}|,$
giving $a_{1,n} \leq \Gamma_{2,n}^{-1/2}/24$. However, due to $\Gamma_{2,n} =
n^{-1}\Var(X_\des) = \Theta(1)$, 
this only implies $a_{1,n} = O(1)$.\footnote{The published version states
$a_{1,n} = O(1/n)$ which is not correct.}
In light of Remark~\ref{bem3.2.5}, we can extend the regime of $x$ to $x = o(n^{1/4})$ to ensure tail equivalence for $X_{\des}$. 
\end{remark}

\noindent Since Theorem~\ref{thm3.2.4} cannot be applied to inversions, we need
to use Theorem~\ref{thmfeller2} to achieve tail equivalence. Indeed, this is
successful because the components $X_\inv^{(n,i)}$ are bounded and the variance
of $X_\inv$ is of appropriate magnitude. This argument also works for descents,
but for these, we can use the broader regime $x = o(n^{1/3})$ according to
Remark~\ref{bem4.2}. We summarize these observations for the numbers of
inversions and descents on classical Weyl groups as follows.\footnote{The
published version of this paper contained the too optimistic 
requirement $k_n = \exp(o(n^{2/3}))$ in assumption (b) in the following theorem.
Via $a_{1,n} = O(1)$ in Remark~\ref{bem4.2}, only $o(n^{2/4})$ is possible.}

\begin{theorem} \label{thm:mainWeyl} 
Let $(W_n)_{n \in \N}$ be a sequence of classical Weyl groups with $\rk(W_n) = n$ for all $n \in \N$.  Let $(X_{nj})_{j=1,\ldots,k_n}$ be a row-wise i.i.d.\ triangular array with either $X_{n1} \overset{\D}{=} X_\inv$ $\forall n \in \N$ or $X_{n1} \overset{\D}{=} X_\des$ $\forall n \in \N,$ where:
\begin{enumerate*}
    \item[\emph{(a)}] If $X_{n1} \overset{\D}{=} X_\inv$ $\forall n \in \N,$ then we assume $k_n = \exp\bigl(o(n^{1/3})\bigr).$
    \item[\emph{(b)}] If $X_{n1} \overset{\D}{=} X_\des$ $\forall n \in \N,$ then we assume $k_n = \exp\bigl(o(n^{1/2})\bigr).$
\end{enumerate*}
Let $M_n := \max\{X_{n1}, \ldots, X_{nk_n}\}$. Let $\mu_n := \E(X_{n1}),$ $s_n^2 := \Var(X_{n1}),$ and  
\begin{align*}
    \alpha_n &= \frac{1}{\sqrt{2\log k_n}}\,, & \beta_n &= \frac{1}{\alpha_n} - \frac{1}{2}\alpha_n\bigl(\log \log k_n + \log(4\pi)\bigr)\,.
\end{align*}
Put $a_n := \alpha_n s_n$ and $b_n := \beta_ns_n + \mu_n$. Then, for all $x \in \R$ we have
\[\PP(M_n \leq a_nx + b_n) \longrightarrow \exp\bigl(-\exp(-x)\bigr)\,.\]
\end{theorem}

\begin{proof}
  Let $F_n$ be the CDF of $X_{n1}$. Each $F_n$ is a sum of $n$ summands by Corollary~\ref{cor:sumDec}. In the case of $(X_{nj})_{j=1,\ldots,k_n}$ being numbers of inversions, applying Theorem~\ref{thmfeller2} separately for each $n \in \N$ gives
\[
    1 - F_n(xs_n) = \exp\left(-\frac{1}{2}x^2Q_n(x)\right)\left(1 - \Phi(x) + \vartheta \lambda_ne^{-x^2/2}\right), \quad n = 1, 2, \ldots.
\]
The condition $\lambda_n = O(n^{-1/2})$ can be equivalently expressed as $|X_k| = O(n^{-1/2}s_n)$. The degrees of finite Coxeter groups are bounded by $2n$, and the values of the centered variables $X_\inv^{(n,i)} - \E\left(X_\inv^{(n,i)}\right)$ are bounded by $n$. Furthermore, $s_n = O(n^{3/2})$ holds.
Therefore, the choice of $\lambda_n = O(n^{-1/2})$ is possible. Upon undoing the centering assumed in Theorem~\ref{thmfeller2}, we obtain according to Remark~\ref{crucialremark}:
\[ 
    1 - F_n(\mu_n + s_n y) \sim 1 - \Phi(y), \quad
    \text{whenever~} y = o(n^{1/6}).
\]
Plugging in $y = \alpha_n x + \beta_n$ and treating $x$ as a constant, the condition $\alpha_n x + \beta_n \prec n^{1/6}$ in Feller's theorem is satisfied due to $n \succ \log(k_n)^3$ by assumption (a).   In the case of $(X_{nj})_{j=1,\ldots,k_n}$ being numbers of descents, Theorem~\ref{thm3.2.4} and Remark~\ref{bem4.2} give $1 - F_n(\mu_n + s_n y) \sim 1 - \Phi(y)$ for $y = o(n^{1/4})$, which is satisfied for $y = \alpha_nx + \beta_n$ by assumption (b). Hence,
\[  k_n\bigl(1 - F_n(a_nx + b_n)\bigr) =
    k_n\Bigl(1 - F_n\bigl(\mu_n + s_n(\alpha_n x + \beta_n)\bigr)\Bigr) \longrightarrow e^{-x},
\]
proving the Gumbel attraction of the row-wise maxima~$M_n$ in both cases.
\end{proof}

\begin{remark}
The proof of Theorem \ref{thm:mainWeyl} fails when we try to further extend the regime of $k_n$. According to~\cite{feller1943generalization}, if $\lambda_n = O(n^{-1/2})$ and if $x$ is chosen in a way that $n^{1/6} \prec x \prec n^{1/4}$ in Theorem~\ref{thmfeller2}, then we have
\begin{equation}
    1 - F_n(xs_n) \sim \exp\left(-\frac{1}{2}q_{n,1}x^3\bigl(1 - \Phi(x)\bigr)\right), \label{eqn4.1}
\end{equation}
as $Q_n(x) = q_{n,1}x + \sum_{\nu=2}^\infty q_{n,\nu}x^\nu$ with $q_{n,1} = o(n^{-1/2}).$ However, $n^{1/2} \prec x^3 \prec n^{3/4},$ giving 
\[\exp\left(-\frac{1}{2}x^2Q_n(x)\right) = \exp\left(-\frac{1}{2}q_{n,1}x^3 + o(1)\right),\]
from which \eqref{eqn4.1} follows. The first coefficient $q_{n,1}$ is explicitly stated by Feller~\cite[Eq. (2.18)]{feller1943generalization} as
\[q_{n,1} = \frac{1}{3s_n^3}\sum_{i=1}^n \E\left(X_{ni}^3\right).\]
Considering the number of inversions on classical Weyl groups, we have $s_n^3 = \Theta(n^{9/2})$ and $X_{ni} \sim U(\{0, 1, \ldots, d_i - 1\})$. For a discrete uniformly distributed random variable, the third moment is
\[\E\left(X_{ni}^3\right) = \sum_{j=0}^{d_i} \frac{1}{d_i+1}j^3 = \frac{1}{d_i+1}\frac{d_i^2(d_i+1)^2}{4} = \frac{d_i^2(d_i+1)}{4} = \Theta(d_i^3)\,.\]
As the degrees of the classical Weyl groups are evenly spread across $2, \ldots, n+1$ or $2, 4, \ldots, 2n,$ respectively, we conclude that
\[\sum_{k=1}^n \E\left(X_{ni}^3\right) = \Theta(n^4) \Longrightarrow q_{n,1} = \Theta(n^{-1/2})\,.\]
In order to eliminate $-(1/2)q_{n,1}x^3$ in \eqref{eqn4.1}, we need $x^3 = o(n^{1/2}) \Longrightarrow x = o(n^{1/6}),$ which contradicts the assumption of $x \succ n^{1/6}$.
\end{remark}


\subsection{Arbitrary finite Coxeter groups}
The EVLT for $X_\des$ is based only on the application of Theorem~\ref{thm3.2.4}
to the representation of $X_\des$ in Corollary~\ref{cor:sumDec}b).  These
arguments require that $\Var(X_\des)$ grows linearly with respect to the rank,
which is particularly the case for products of classical Weyl groups. 
\footnote{The published version of this paper did not 
state the assumption $\Var(X_\des) = \Theta(n)$ explicitly.}
Therefore, we can state:

\begin{theorem} \label{thm4.5}
Let $(W_n)_{n \in \N}$ be a sequence of finite Coxeter groups with $\rk(W_n) = n~\forall n \in \N$, which satisfies $\Var(X_\des) = \Theta(n)$. Let $k_n = \exp\bigl(o(n^{1/2})\bigr),$ let $(X_{nj})_{j=1,\ldots,k_n}$ be a row-wise i.i.d.\ triangular array with $X_{n1} \overset{\D}{=} X_\des$ and let $M_n := \max\{X_{n1}, \ldots, X_{nk_n}\}$. Let $a_n, b_n$ be as in Theorem~\ref{thm:mainWeyl}. Then,
\[
    \PP(M_n \leq a_nx + b_n) \longrightarrow \exp\bigl(-\exp(-x)\bigr) \quad \forall x \in \R\,.
\]
\end{theorem}

The EVLT for inversions is based on Theorem~\ref{thmfeller2}. For arbitrary finite Coxeter groups, the condition $|X_{k}| = O(n^{-1/2}s_n)$ in the proof of Theorem~\ref{thmfeller2} is not trivially satisfied. For inversions, the $X_k = X_\inv^{(n,i)} - \E\left(X_\inv^{(n,i)}\right)$ can be bounded by the maximum degree $d_{\max}$ of the $n$-th Coxeter group~$W_n$. Therefore, this condition is written more descriptively as
\begin{equation}
    d_{\max} \preccurlyeq \frac{s_n}{\sqrt{n}}. \label{eq:growth1}
\end{equation}
Using the method of Theorem~\ref{thm:mainWeyl}, we can state a general EVLT for $X_\inv$ on sequences of finite Coxeter groups. Together with Theorem~\ref{thm4.5}, this is our main result as it gives sufficient conditions for the Gumbel attraction of $X_\inv$ and~$X_\des$.

\begin{theorem}\label{thm:mainArbitraryCoxeter}
Let $(W_n)_{n \in \N}$ be any sequence of finite Coxeter groups with $n = \rk(W_n)$. Let $k_n = \exp\bigl(o(n^{1/3})\bigr),$ let $(X_{nj})_{j=1,\ldots,k_n}$ be a row-wise i.i.d.\ triangular array with $X_{n1} \overset{\D}{=} X_\inv$ and let $M_n := \max\{X_{n1}, \ldots, X_{nk_n}\}$. Let $a_n, b_n$ be as in Theorem~\ref{thm:mainWeyl}. If condition \eqref{eq:growth1} holds, then 
  \[
    \PP(M_n \leq a_nx + b_n) \longrightarrow \exp\bigl(-\exp(-x)\bigr).
  \]
\end{theorem}

In the following subsections, we rephrase condition~\eqref{eq:growth1} more descriptively for certain products of finite irreducible Coxeter groups.

\subsection{Sequences of products of classical Weyl groups.}
Let $W_n = \prod_{i=1}^{l_n} W_{n,i},$ where each component $W_{n,i}$ is a classical Weyl group, and let $n = \rk(W_{n,1}) + \ldots + \rk(W_{n,l_n})$ denote the total rank. Then,
\[\Var(X_\inv^{W_n}) = \sum_{i=1}^{l_n} \Var(X_{\inv}^{W_{n,i}}).\]
For each $n$ and $i$, we have $\Var(X_{\inv}^{W_{n,i}}) = \Theta(\rk(W_{n,i}))$. However, the total variance $\Var(X_\inv^{W_n})$ is not of cubic order with respect to~$n$. By Corollary~\ref{cor:sumDec}a), $\Var(X_\inv^{W_n})$ still has an independent sum representation of $n$ summands. The maximum degree $d_{\max} \leq 2\max\{\rk(W_{n,1}), \ldots, \rk(W_{n,l_n})\}$ bounds these summands. Therefore, omitting the factor $2$ without asymptotic consequences, condition~\eqref{eq:growth1} can be written as
\begin{equation}\label{eq:growth2}
    d_{\max} \preccurlyeq \frac{1}{\sqrt{n}}\sqrt{\rk(W_{n,1})^3 + \ldots + \rk(W_{n,l_n})^3}.
\end{equation}

\begin{theorem}
  Let $W_n = \prod_{i=1}^{l_n} W_{n,i}$ be a sequence of direct
  products of classical Weyl groups, and let
  $(X_{nj})_{j=1,\ldots,k_n}$ be a row-wise i.i.d.\ triangular array
  with $X_{n1} \overset{\D}{=} X_\inv$. Let $k_n, M_n,$ $a_n, b_n$ be
  as in Theorem~\ref{thm:mainWeyl}. If condition \eqref{eq:growth2}
  holds, then for all $x \in \R \negmedspace:$
  \[
    \PP(M_n \leq a_nx + b_n) \longrightarrow \exp\bigl(-\exp(-x)\bigr).
  \] 
\end{theorem}

  As noted above, $X_\des$ satisfies the EVLT on all products of classical Weyl groups.

\subsection{Sequences involving dihedral groups} \label{dihedral}
In the following, we consider sequences of finite Coxeter groups consisting of dihedral components and classical Weyl group components. Here, it is more convenient to drop the convention $\rk(W_n) = n$.   Some care can be necessary when applying Theorem~\ref{thm:mainWeyl}.

\begin{example}  We consider a sequence of products of dihedral groups. Such a sequence consists of groups of even rank. We write 
\begin{equation}
\label{eq:dihedralprod}
 W_{n} = \prod_{i=1}^{h_n} I_2(m_{n,i})  
\end{equation}
for some $(m_{n,i})_{n \in \N, i=1, \ldots, n}$ and a growing sequence $(h_n)_{n \in \N}$. Then, $\rk(W_n) = 2h_n$. Now, the condition for applying Theorem~\ref{thm:mainWeyl} is $h_n \succ \log(k_n)^3$.
\end{example}

\begin{remark} \label{remarkdihedral} It has been stated by Kahle \& Stump~\cite{kahle2020counting} that for products of dihedral groups,
\begin{align*}
    \Var(X_\inv) &= \sum_{i=1}^{h_n} \frac{m_{n,i}^2+2}{12}, & \Var(X_\des) & = \sum_{i=1}^{h_n} \frac{1}{m_{n,i}}\,.
\end{align*}
Further, $I_2(m_{n,i})$ has degrees $2, m_{n,i}$. Therefore, the degrees of $W_n$ are $2, \ldots, 2$, $m_{n,1}, \ldots, m_{n,h_n}$ with $h_n$ twos. These formulas are now used to rephrase the condition \eqref{eq:growth1} for mixed products of dihedral groups and classical Weyl groups.
\end{remark}

Let $(W_n)_{n \in \N}$ be a sequence of finite Coxeter groups and
write $W_n = G_n \times I_n$, where $G_n$ contains only classical
components and $I_n$ contains only dihedral components as
in~\eqref{eq:dihedralprod}.  We use the following additional notation:
\begin{align*}
    r_n &:= \rk(G_n)\,,  &  R_n &:= \rk(W_n) = r_n + 2h_n\,, \\
    G_n &:= \prod_{i=1}^{l_n} G_{n,i}\,, & I_n &:= \prod_{i=1}^{h_n} I_2(m_{n,i})\,,
    \\
    r_{\max} &:= \max\{\rk(G_{n,1}), \ldots, \rk(G_{n,l_n})\}\,, & \mathcal{R}_n^2 &:= 
    \sum_{i=1}^{l_n} \rk(G_{n,i})^3\,, \\
     m_{\max} &:= \max\{m_{n,1}, \ldots, m_{n,h_n}\}\,, & \mathcal{M}_n^2 &:= \sum_{i=1}^{h_n} m_{n,i}^2\,. 
\end{align*}
Furthermore, we write $X_\inv^G$ and $X_\inv^I$ for the
number of inversions in the classical Weyl part and the dihedral part of $W_n$, respectively. As $\rk(W_n) = r_n + 2h_n,$ the growth condition is that at least one of $r_n \succ \log(k_n)^3$ or $h_n \succ \log(k_n)^3$ holds, i.e., $\log(k_n)^3 \prec \max\{r_n, h_n\}$.
Regardless of how $G_n$ is composed, Remark~\ref{rem:orders} tells us that 
\begin{align*}
    \E(X_\inv^G) &= \Theta(r_n^2)\,, & \Var(X_\inv^G) &= \Theta(r_n^3)\,,
\end{align*}
Combining this with Remark~\ref{remarkdihedral}, we obtain $\Var(X_\inv) = \Theta(\mathcal{R}_n^2 + \mathcal{M}_n^2)$.
By Theorem~\ref{thm:factorGF},
\begin{align*}
    \G_\inv(W_n;x) &= \prod_{i=1}^{R_n} (1 + x + \ldots + x^{d_i-1})\,,
\end{align*}
where the degrees $d_i$ encompass the degrees of the classical Weyl group parts (bounded by $2r_{\max}$), $h_n$ twos, and the numbers $m_{n1}, \ldots, m_{nh_n}$ (bounded by $m_{\max}$). For such composed groups, the sufficient condition \eqref{eq:growth1} for the Gumbel behavior of $X_\inv$ is
\begin{equation}\label{eq:maxBehave}
    \max\{n_{\max}, m_{\max}\} = O\left(\sqrt{R_n^{-1}(\mathcal{R}_n^2 + \mathcal{M}_n^2)}\right).
\end{equation}
These observations are summarized as follows:

\begin{theorem} \label{thm:mainWeylDihedral}Let $W_n = G_n \times I_n$ be a sequence of finite Coxeter groups, where the classical components are pooled in $G_n$ and the dihedral components are pooled in $I_n$. Let $k_n$ be a sequence of integers satisfying $k_n = \exp\Bigl(o\bigl(\max\{r_n \vee h_n\}^{1/3}\bigr)\Bigr)$. Let $(X_{nj})_{j=1,\ldots,k_n}$ be a row-wise i.i.d.\ triangular array with $X_{n1} \overset{\D}{=} X_\inv$ and let $M_n := \max\{X_{n1}, \ldots, X_{nk_n}\}$. Let $a_n, b_n$ be as in Theorem~\ref{thm:mainWeyl}. If condition \eqref{eq:maxBehave} holds, then 
  \[
    \PP(M_n \leq a_nx + b_n) \longrightarrow \exp\bigl(-\exp(-x)\bigr) \quad \forall x \in \R.
  \]
\end{theorem}

In the case of direct products consisting only of dihedral groups, i.e., $G_n = \emptyset$ and $W_n = \prod_{i=1}^{h_n} I_2(m_{n,i}),$ the statement of Theorem~\ref{thm:mainWeylDihedral} is simplified as follows.

\begin{corollary} \label{corollary4.8} 
Let $W_n = \prod_{i=1}^{h_n} I_2(m_{n,i})$ be a product of dihedral groups and $k_n = \exp\Bigl(o\bigl(h_n^{1/3}\bigr)\Bigr)$. Let $(X_{nj})_{j=1,\ldots,k_n},$ $M_n,$ $a_n,$ $b_n$ be as in Theorem \ref{thm:mainWeylDihedral}. If $m_{\max} \preccurlyeq h_n^{-1/2}\mathcal{M}_n,$ then $\PP(M_n \leq a_nx + b_n) \longrightarrow \exp\bigl(-\exp(-x)\bigr)$ $\forall x \in \R$.
\end{corollary}

\begin{remark}
  The condition $m_{\max} \preccurlyeq h_n^{-1/2}\mathcal{M}_n$ in Corollary~\ref{corollary4.8} is not trivial.  Writing the orders of the dihedral group as a vector
  $\mathbf{m}_n = (m_{n,1}, \ldots, m_{n,h_n})$, we get
  \[
    \|\mathbf{m}_n\|_\infty \preccurlyeq \frac{1}{\sqrt{n}}\|\mathbf{m}_n\|_2,
  \]
  where, as usual, $\| \negmedspace \cdot \negmedspace\|_{\infty}$ is the maximum norm and $\| \negmedspace \cdot \negmedspace \|_{2}$ is
  the euclidean norm.  Since
  $\|\mathbf{m}_n\|_\infty \geq \frac{1}{\sqrt{n}}\|\mathbf{m}_n\|_2$
  always holds, the condition can be stated more
  precisely as
  \[
    \|\mathbf{m}_n\|_\infty = \Theta\left(\frac{1}{\sqrt{n}}\|\mathbf{m}_n\|_2\right).
  \]
\end{remark}

\textbf{Remark.}\footnote{Added in the final arXiv version, not in published version.}
For the EVLT for $X_\des$ on sequences of mixed products of Coxeter groups, we
require $\Var(X_\des) = \Theta(r_n) + \sum_{i=1}^{h_n} m_{n,i}^{-1}
\overset{!}{=} \Theta(R_n)$ according to Theorem~\ref{thm4.5}. This is in
particular satisfied if all $m_{n,i}$ are uniformly bounded.

\section{Universal extreme value limit theorem for triangular arrays}
\label{sec:BerryEsseen}

As seen in the previous sections, large deviations theory can be employed to derive extreme value limit theorems for triangular arrays of exponential length. However, these tools require assumptions that are not satisfied in many situations. If the number of samples in the rows of the triangular arrays is strongly reduced, then the Gumbel extreme value limit can already be derived from the Berry--Esseen bound in the CLT. This allows to obtain a weaker but universal version of Theorem~\ref{thm:mainWeyl} for a very general class of families of distributions.

\begin{theorem} \label{thm5.1}
{Let $F_1, F_2, \ldots$ be a sequence of distributions which satisfy the Berry--Esseen bound}
\[
    {\sup_{x \in \R} \left|\frac{F_n(x) - \E(F_n)}{\sigma(F_n)} - \Phi(x)\right| = O(n^{-1/2})\,,}
\]
{where $\Phi$ is the CDF of $\NN(0,1)$. Let $(X_{nj})_{j=1,\ldots,k_n}$ be a triangular array with $X_{n1} \sim F_n$ and let $M_n, \alpha_n, \beta_n, a_n, b_n$ be as in Theorem~\ref{thm:mainWeyl}. If $k_n = O(n^\varepsilon)$ for some $\varepsilon < 1/2,$ then}
\[
    {\PP(M_n \leq a_nx + b_n) \longrightarrow \exp(-\exp(-x))\,.}
\]
\end{theorem}

\begin{proof}
{Let $Y_n := \sigma(X_{n1})^{-1}\left(X_{n1} - \E(X_{n1})\right)$ and $\mathcal{N} \sim \NN(0,1)$. Then, the Berry--Esseen bound is equivalent to}
\begin{equation}
    {\sup_{x \in \R} |\PP(Y_n > x) - \PP(\mathcal{N} > x)| = O(n^{-1/2})\,.} \label{3.11}
\end{equation}
{Now, replace $x$ with $x_n := \alpha_nx + \beta_n$ for fixed $x$. For monotonicity reasons, we can also assume that $k_n = \Omega(n^\delta)$ for some $\delta > 0$. From Mill's Ratio (see \cite{mills1926table}), we can deduce}
\begin{align*}
    {\PP(\mathcal{N} > x_n)}~& {= 1 - \Phi(\alpha_n x + \beta_n) \sim \frac{1}{\alpha_n x + \beta_n}\varphi(\alpha_n x + \beta_n)} \\
    &{= O\left(\frac{1}{\sqrt{\log(n)}}\right)\varphi\left(\frac{x}{\sqrt{2\varepsilon\log(n)}} + \sqrt{2\varepsilon\log(n)} - \frac{\log(4\pi\varepsilon\log(n))}{2\sqrt{2\varepsilon\log(n)}}\right)} \\
    &{= O\left(\frac{1}{\sqrt{\log(n)}}\right)\exp\left(-\varepsilon \log(n) - \frac{1}{2}\log(4\pi\varepsilon\log(n)) + O\left(\frac{\log(\log(n))^2}{\log(n)}\right)\right)} \\
    &{= O\left(\frac{1}{\sqrt{\log(n)}}\right)n^{-\varepsilon}(1 + o(1))\,,}
\end{align*}
{from which it follows that $\PP(\mathcal{N} > x_n) \succ n^{-1/2}$. From here, the proof continues the same way as in Theorem~\ref{thm:mainWeyl}.}
\end{proof}

\noindent This universal theorem strongly narrows down the permutation statistics and other distributions $(F_n)_{n \in \N}$ for which a triangular array of any size is not attracted to the Gumbel distribution.

\section{Outlook}
\label{sec:outlook}


There are several interesting ways to extend these results to other statistics and settings. However, the requirements of the specific proof methods based on Theorems~\ref{thmfeller2} and~\ref{thm3.2.4} are an obstacle to such extensions. For these methods, it is essential to know a factorization of the generating function, since it corresponds to an independent sum decomposition of the permutation statistic. Furthermore, the variances must be of appropriate magnitude to satisfy the control condition $|X_k| < \lambda_ns_n$ in Theorem~\ref{thmfeller2}. While CLTs can be proved without additive decompositions (e.g., those on the \textit{two-sided Eulerian statistic} $X_T(w) := X_\des(w) + X_\des(w^{-1})$ in \cite{bruck2019central}), they are crucial for our analysis of extreme values. 

An elaborate list of permutation statistics is provided within the database \cite{FindStat}. Many of these are asymptotically normal and satisfy the Berry--Esseen bound, which gives a Gumbel statement by Theorem~\ref{thm5.1} with a low bound of $k_n$. In each of these cases, it is an open question to obtain a subexponential bound of $k_n,$ or at least one that permits the uniform triangular array $(X_{nj})_{j=1,\ldots,n}$.


There is also interest in joint distributions of two or more permutation statistics, e.g., $(\inv(w), \des(w))$ or $(\des(w), \des(w^{-1}))$. The main challenge here is the dependence structure between the components, which means that new methods are necessary. Since the first posting of this paper, this problem has been addressed in \cite{dorr2023extremes}. Similar challenges arise when investigating the numbers of inversions or descents within other structures and distributions on permutation groups, such as conjugacy classes \cite{fulman1998distribution, kim2020central}, multisets or the Mallows distribution~\cite{he2022central}. 

A more general concept combining inversions and descents is that of \textit{$d$-inversions} and \textit{$d$-descents}. This concept was originally introduced only for permutation groups $A_n$. Inversions compare all pairs of indices, while descents compare only adjacent indices. Now, generalized $d$-inversions compare indices of distance at most $d$, with $d < n$ fixed, while $d$-descents compare indices of distance exactly $d$. A CLT was proved by Bona~\cite{bona2007generalized} using a dependency graph criterion on indicator random variables $Y_{ij} := \textbf{1}\{(i,j)$ forms a $d$-inversion$\}$. A generalization to signed and even-signed permutations can be achieved by transferring from index pairs to roots derived from the corresponding standard basis vectors (see Meier \& Stump~\cite{meier2022central}). Again, knowledge of generating functions is missing, but the extreme value theory of $d$-inversions is still interesting.

The proofs of large deviation theorems such as Theorems~\ref{thmfeller2} and~\ref{thm3.2.4} are very laborious. Besides, sum representations of permutation statistics with dependent summands are more readily available and proving dependence conditions may be more feasible than factorizations of generating functions. Therefore, it may be worth investigating if the independence condition in Theorem~\ref{thmfeller2} can be relaxed to weak dependence conditions. 

\section*{Acknowledgements}
The authors thank Claudia Kirch, Anja Janßen, and Christian Stump for many valuable comments on early versions of the manuscript. This work is funded by the Deutsche Forschungsgemeinschaft (314838170, GRK 2297, MathCoRe).

\bibliographystyle{amsplain}
\bibliography{bibliography.bib}

\bigskip \medskip

\noindent
\footnotesize {\bf Authors' addresses:}
\smallskip

\noindent Philip Dörr, OvGU Magdeburg, Germany,
{\tt philip.doerr@ovgu.de}

\noindent Thomas Kahle, OvGU Magdeburg, Germany,
{\tt thomas.kahle@ovgu.de}

\end{document}